\renewcommand{\theequation}{\thesection.\arabic{equation}}
\def\appendix#1
 \renewcommand{\thesection}{\Alph{section}}
\def\KK{{\mathbb K}}
\def\CC{{\mathbb C}}
\def\RR{{\mathbb R}}
\newcommand{\eq}{\begin{equation}}
\newcommand{\eqa}{\begin{eqnarray}}
\newcommand{\en}{\end{equation}}
\newcommand{\ena}{\end{eqnarray}}
\newtheorem{theorem}{Theorem}[section]
\newtheorem{corollary}[theorem]{Corollary}
\newtheorem{definition}[theorem]{Definition}
\newtheorem{example}[theorem]{Example}
\newtheorem{lemma}[theorem]{Lemma}
\newtheorem{proposition}[theorem]{Proposition}
\newtheorem{remark}[theorem]{Remark}
\def\qed{\hfill$\square$}
\newenvironment{proof}{\begin{Proof}}{\qed\end{Proof}}
\def\beqa{\begin{eqnarray}}
\def\eeqa{\end{eqnarray}}
\def\I{\mathcal{I}}
\def\KK{{\mathbb K}}
\def\CC{{\mathbb C}}
\def\RR{{\mathbb R}}
\newcommand{\be}{\begin{equation}}
\newcommand{\ee}{\end{equation}}
\newcommand{\beq}{\begin{equation}}
\newcommand{\eeq}{\end{equation}}
\newcommand{\bea}{\begin{eqnarray}}
\newcommand{\eea}{\end{eqnarray}}
\def\beqa{\begin{eqnarray}}
\def\eeqa{\end{eqnarray}}
\def\KK{{\mathbb K}}
\def\CC{{\mathbb C}}
\def\RR{{\mathbb R}}
\begin{document}
\title{
\vskip 1cm
{\bf Recipe theorem for the Tutte polynomial for matroids, renormalization group-like approach} }
\author{
{\sf   G\'erard H. E. Duchamp${}^{a}$\thanks{e-mail: ghed@lipn.univ-paris13.fr
}, }
{\sf   Nguyen Hoang-Nghia${}^{a}$\thanks{e-mail: nguyen.hoang@lipn.univ-paris13.fr
}, }
{\sf   Thomas Krajewski${}^{b}$\thanks{e-mail: thomas.krajewski@cpt.univ-mrs.fr},   
}\\ 
 and
{\sf Adrian Tanasa}${}^{a,c}$\thanks{e-mail: adrian.tanasa@ens-lyon.org}}
\date{\today}
\maketitle

\vskip-1.5cm

\vspace{2truecm}

\begin{abstract}
\noindent 
Using a quantum field theory renormalization group-like differential equation, 
we give a new proof of the recipe theorem for the Tutte polynomial for matroids.
The solution of such an equation is in fact given by some appropriate characters of the 
Hopf algebra of isomorphic classes of matroids, characters which are then related to the 
Tutte polynomial for matroids. 
This Hopf algebraic approach also allows to prove, in a new way, a matroid Tutte polynomial
 convolution formula appearing in W. Kook {\it et. al., J. Comb. Series} {\bf B 76} 
(1999).
\end{abstract}



Keywords: Tutte polynomial for matroids, quantum field theory renormalization-group, 
combinatorial Hopf algebras, Hopf algebras characters.
 

\newpage

\section{Introduction and motivation}
\renewcommand{\theequation}{\thesection.\arabic{equation}}
\setcounter{equation}{0}
\label{introduction}

{\it Combinatorial physics} is a growing field, defining itself 
as the interdisciplinary
domain overlapping between combinatorics and physics. 

One can clearly identify nowadays several aspects of this larger and larger 
overlapping.
Thus, the interference between combinatorics and 
quantum mechanics has been investigated in \cite{bf}, \cite{5} (and references within).
An important interplay area can be 
noticed 
between combinatorics (bijective, enumerative and so on) on one hand 
 and statistical physics and integrable combinatorics on another hand 
(see, for example, 
\cite{dflast} for a recent review on this topic).

\medskip

Our paper deals with yet another aspect of this interdisciplinary field of research,
namely the interference between combinatorics and quantum field theory (commutative or non-commutative).
This type of interference has already appeared in the pioneering work 
of Alain Connes and Dirk Kreimer, who have defined a 
Hopf algebra encoding in an elegant way the combinatorics of the process of perturbative renormalization 
in commutative quantum field theory.
This structure has then been generalized for noncommutative quantum field theory 
\cite{fab}, \cite{io-dirk} and for spin-foam quantum gravity models 
\cite{mar}, \cite{io-sf}. For general reviews of various
interferences between algebraic and analytic (and not only) combinatorics and quantum field theory 
in general, we invite the interested reader to consult \cite{io-hdr} and \cite{io-slc}.

In this paper we 
consider appropriate characters of the the Hopf algebra of isomorphic classes of matroids, 
algebra defined 
 (as a particularization of a more general construction of incidence Hopf algebra) 
in 
\cite{s} (and then extensively studied in \cite{cs}). 
We also show that these characters are related to the Tutte polynomial for matroids 
and we then 
use a quantum field theory renormalization group-like 
differential equation to prove the universality of the Tutte polynomial for matroids.
More precisely, we show that a solution of such an equation is given by 
the characters that we have defined. As 
a by-product of our Hopf algebraic approach, 
we give a new proof of a convolution formula for 
the Tutte polynomial for matroids, formula exhibited in \cite{reiner}.

\medskip

The paper is structured as follows. In the following section we introduce 
the renormalization group equation in quantum field theory and we then recall 
some useful notions related to matroids and to the Tutte polynomial for matroids.
Finally, we give the definition of the Hopf algebra of 
isomorphic classes of matroids.
In the third section we define the Hopf algebra characters that will be used in the sequel.
In the following section, we use this construction to give our new proof of the 
convolution formula for the Tutte polynomial for matroids mentioned above.
The fifth section is dedicated to our main result, the proof of the universality of the 
Tutte polynomial for matroids.
The last section presents some concluding remarks and perspectives for future work.

\section{Quantum field theory and matroid reminders}
\renewcommand{\theequation}{\thesection.\arabic{equation}}
\setcounter{equation}{0}

In this section we first briefly recall some quantum field theory notions, 
namely the renormalization group differential equation.
We then give the definition of matroids, of the associated 
Tutte polynomial as well as some further properties which will be useful 
to prove the results of this paper.
Finally, the Hopf algebra of isomorphic classes of matroids is given.

\subsection{Quantum field theory, the renormalization group}
\renewcommand{\theequation}{\thesection.\arabic{equation}}
\setcounter{equation}{0}

A QFT model (for a general introduction 
to QFT  see for example the books \cite{book-ZJ} or \cite{kleiner}) 
is defined by means of a functional integral of the exponential of an 
{\it action} $S$ which, from a mathematical point of view, is a functional
 of the {\it fields} of the model. For the $\Phi^4$ scalar model - the simplest QFT model - 
there is only one type of
field, which we denote by $\Phi (x)$. From a mathematical point of view,
for an Euclidean QFT scalar model,
one has $\Phi:\RR^D\to \KK$, 
where $D$ is usually taken equal to $4$ (the dimension of the space) 
and $\KK\in\{\RR,\CC\}$ (real, respectively complex fields).

The quantities computed in QFT are generally divergent. One thus has to consider a 
real, positive, {\it cut-off}
 $\Lambda$ - the flowing parameter.
 This leads to a family of cut-off dependent actions, family denoted by
$S_\Lambda$. The derivation $\Lambda\frac{\partial S_\Lambda}{\partial\Lambda}$ gives the 
{\it renormalization group equation}.

The quadratic part of the action - the {\it propagator} of the model - can be written in the following 
way
\beqa
\label{propa}
C_{\Lambda,\Lambda_0}(p,q)=\delta(p-q)\int_{\frac{1}{\Lambda_0}}^{\frac{1}{\Lambda}}
d\alpha e^{-\alpha p^2},
\eeqa
with $p$ and $q$ living in the Fourier transformed space $\RR^D$ and 
$\Lambda_0$ a second real, positive cut-off. In perturbative QFT, one has to 
consider {\it Feynman graphs}, and to associate to each such a graph a 
{\it Feynman integral}  
(further related to quantities actually measured in physical experiments).
The contribution of an edge of such a Feynman graph to its associated Feynman integral 
is given by an integral such as \eqref{propa}.

One can then get (see \cite{pol}) 
the Polchinski flow equation
\beqa
\label{eq:pol}
\Lambda \frac{\partial S_\Lambda}{\partial\Lambda}=
\int_{\RR^{2D}} \frac 12 d^D p d^D q
\Lambda\frac{\partial C_{\Lambda,\Lambda_0}}{\partial \Lambda}
\left(
\frac{\delta^2S}{\delta \tilde \Phi (p)\delta \tilde \Phi (q)}
-\frac{\delta S}{\delta \tilde \Phi (p)} \frac{\delta S}{\delta \tilde \Phi (q)}
\right),
\eeqa
where $\tilde \Phi$ represents the Fourier transform of the function $\Phi$. 
The first term in the right hand side (rhs) of the equation above corresponds 
to the derivation of a propagator associated to a bridge 
in the respective Feynman graph.
The second term corresponds to an edge which is not a bridge and is 
part of some circuit in the graph. One can see this diagrammatically in Fig. \ref{fig:pol}.
\begin{figure}[h]
\centerline{\includegraphics[width=12cm]{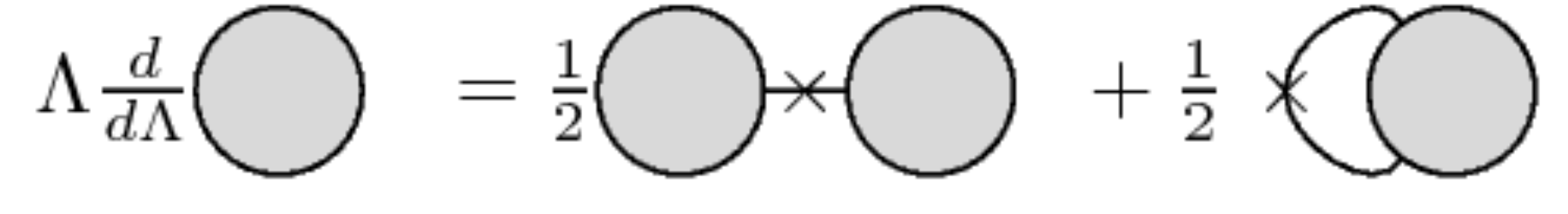}   }
\caption{Diagrammatic representation of the flow equation.}
\label{fig:pol}
\end{figure}

This equation can then be used to prove perturbative renormalizability in QFT.
Let us also stress here, that an equation of this type is also used 
to prove a result of E. M. Wright which expresses the generating function 
of connected graphs under certain conditions (fixed excess). 
To get this generating 
functional (see, for example, Proposition $II.6$ the book \cite{book-fs}), 
one needs to consider contributions of two types of edges (first contribution
when  the 
edge is a bridge and a second one when not - see again Fig. \ref{fig:pol}).


\subsection{Matroids and the Tutte polynomial for matroids}

In this subsection we recall the definition and some properties of the
 Tutte polynomial for matroids as well as of the matroid Hopf algebra 
defined in \cite{s}.

Following the book \cite{oxley}, one has the following definitions:

\begin{definition}
A {\bf matroid} M is a pair $(E, \mathcal{I})$ consisting of a finite set E 
and a collection of subsets of E satisfying the following set of axioms:
$\mathcal{I}$ is non-empty,
every subset of every member of $\mathcal{I}$ is also in $\mathcal{I}$
and, finally,
if $X$ and $Y$ are in $\mathcal{I}$ and $|X| = |Y | + 1$, then there is an element 
$x$ in $X - Y$ such that $Y \cup \{x\}$ is in $\mathcal{I}$.
The set $E$ is the ground set of the matroid and the members of 
$\mathcal{I}$ are the independent sets of the matroid.
\end{definition}

One can define a matroid on the edge set of any graph - {\bf graphic matroid}. The reciproque 
does not hold (not every matroid is a graphic matroid).

Let $E$ be an $n-$element set and let $\I$ be the collection of subsets 
of $E$ with at most $r$ elements, $0\le r\le n$. One can check that $(E,\I)$ 
is a matroid; it is called the {\bf uniform matroid} $U_{r,n}$.

\begin{remark}
\label{remarca}
 If one takes $n=1$, there are only two matroids, namely $U_{0,1}$ and $U_{1,1}$ and both of these 
matroids are graphic matroids. The graphs these two 
matroids correspond to are the graphs with one edge of Fig. \ref{graf1} and Fig. \ref{graf2}.
\begin{figure}[!ht]
\centerline{\includegraphics[scale=0.5]{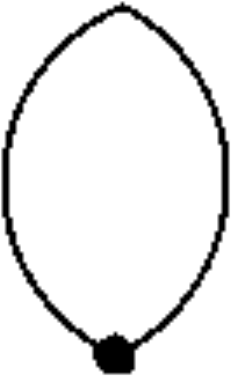}}
\caption{The graph corresponding to the matroid $U_{0,1}$.}
\label{graf1}
\end{figure}
\begin{figure}[!ht]
\centerline{\includegraphics[scale=0.5]{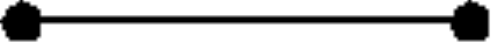}}
\caption{The graph corresponding to the matroid $U_{1,1}$.}
\label{graf2}
\end{figure}
In the first case, the edge is a loop (in graph theoretical terminology) or a tadpole 
(in QFT language). In the second case, the edge represents a bridge (in graph theoretical 
terminology) or a $1$-particle-reducible line (in QFT terminology) - the number of connected 
components of the graphs increases by $1$ if one deletes the respective edge. 
\end{remark}

\begin{definition}
 Maximal independent sets of a matroid are called bases. 
The collection of minimal dependent sets of a matroid are called circuits.
\end{definition}

Let $M=(E,\I)$ be a matroid and let $\cal B=\{ B\}$ be the collection of bases of $M$. 
Let ${\cal B}^\star = \{E - B: B \in {\cal B} \}$. 
Then ${\cal B}^\star$ is the collection of bases of a matroid $M^\star$ on E. 
The matroid $M^*$ is called the dual of $M$.

\begin{definition}
\label{def-rank}
Let $M=(E,\I)$ be a matroid. 
The {\bf rank} $r(A)$ of $A \subset E$ is defined as the cardinal of a maximal independent set in $A$.
\begin{equation}\label{eq:rankfunc}
r(A) = max\{|B| \mbox{  s.t.  } B \in \I, B \subset A\}\ .
\end{equation}
\end{definition}

\begin{definition}
\label{def-nullity}
Let $M=(E,\mathcal{I})$ be a matroid with a ground set $E$. 
The {\bf nullity} function is given by  
\begin{equation}
\label{defnul}
n(M) = |E|- r(M)\ .
\end{equation}
\end{definition}

\begin{definition}
\label{def-loop}
Let $M=(E,\I)$ be a matroid.
The element $e \in E$. 
 is a \textbf{loop} iff $\{e\}$ is the circuit.
\end{definition}

\begin{definition}
\label{def-coloop}
Let $M=(E,\I)$ be a matroid. 
The element $e\in E$ is a \textbf{coloop} iff, for any basis $B$, 
$e\in B$ .
\end{definition}

Let us now define two basic operations on matroids.
Let $M$ be a matroid $(E,\I$) and $T$ be a subset of $E$. 
Let $\I'=\{I\subseteq E-T: I \in \I\}$. 
One can check that $(E-T,\I')$ is a matroid. 
We denote this matroid by $M\backslash T$ - the {\bf deletion} of $T$ from $M$.
The {\bf contraction} of $T$ from $M$, $M/T$, is given by the formula: 
$M/T=(M^\star\backslash T)^\star$.

Let us also recall the following results:

\begin{lemma}\label{lm:res-del}
Let $M$ be a matroid $(E,\I$) and $T$ be a subset of $E$. One has \begin{equation}
M|_T = M\backslash_{E-T} \ .
\end{equation}
\end{lemma}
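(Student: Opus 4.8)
The plan is to unwind the two operations appearing in the statement and to observe that they yield literally the same ground set and the same collection of independent sets. First I would recall the (standard) definition of the restriction: for $T\subseteq E$, the matroid $M|_T$ is the pair $(T,\I|_T)$ with $\I|_T=\{I\subseteq T:I\in\I\}$. That this pair really is a matroid is immediate, since the hereditary axiom and the augmentation axiom for $\I$ are both inherited under intersection with a fixed subset $T$; hence $M|_T$ is well defined, and it suffices to identify it with $M\backslash_{E-T}$.

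Next I would apply the definition of deletion recalled just above, but with the deleted set taken to be $E-T$ instead of $T$. By that definition, $M\backslash(E-T)$ has ground set $E-(E-T)$ and independent sets $\{I\subseteq E-(E-T):I\in\I\}$. Since $T\subseteq E$, one has the set-theoretic identity $E-(E-T)=T$, so the ground set of $M\backslash(E-T)$ is exactly $T$, and its family of independent sets is exactly $\{I\subseteq T:I\in\I\}=\I|_T$.

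Comparing the two, $M|_T$ and $M\backslash_{E-T}$ share the same ground set $T$ and the same family of independent sets, hence they coincide as matroids, which is the asserted identity. I do not expect any genuine obstacle here: the lemma is essentially the remark that ``restricting to $T$'' and ``deleting the complement of $T$'' are two names for a single operation, and the only thing needing a line of justification is the elementary identity $E-(E-T)=T$, valid because $T\subseteq E$. \qed
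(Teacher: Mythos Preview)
Your argument is correct: once one writes down the standard definition of the restriction $M|_T$ and applies the paper's definition of deletion to the set $E-T$, the identity $E-(E-T)=T$ immediately forces equality of both the ground sets and the independent-set collections. The paper itself does not supply a proof of this lemma; it is stated among the ``recalled'' results from matroid theory, so your direct verification from the definitions is exactly what is needed.
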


\begin{lemma}
\label{lm:coloop}
If $e$ is either a coloop or a loop of a matroid $M=(E,\I)$, then $M/e = M\backslash e$.
\end{lemma}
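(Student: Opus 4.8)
The plan is to treat the coloop case and the loop case separately, observing that matroid duality exchanges them, so that only one needs a hands-on argument. Suppose first that $e$ is a loop of $M=(E,\mathcal I)$. Then $\{e\}$ is dependent, so $e$ lies in no independent set and hence in no basis of $M$; since the bases of $M^{\star}$ are the complements $E\setminus B$ of the bases $B$ of $M$, this forces $e$ to lie in every basis of $M^{\star}$, i.e.\ $e$ is a coloop of $M^{\star}$. Granting the coloop case applied to $M^{\star}$ gives $M^{\star}/e=M^{\star}\backslash e$. I would then use only the defining identity $N/T=(N^{\star}\backslash T)^{\star}$ and the involution $(N^{\star})^{\star}=N$ (both immediate from the description of the bases of a dual): applied to $N=M^{\star}$ the defining identity reads $M^{\star}/e=(M\backslash e)^{\star}$, so dualizing $M^{\star}/e=M^{\star}\backslash e$ yields $M\backslash e=(M^{\star}/e)^{\star}=(M^{\star}\backslash e)^{\star}=M/e$, the last equality being the definition of $M/e$. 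This disposes of the loop case.

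Now assume $e$ is a coloop, i.e.\ $e$ lies in every basis of $M$. Both $M/e$ and $M\backslash e$ have ground set $E\setminus\{e\}$, and since a matroid is recovered from its family of bases (the independent sets being exactly the subsets of bases), it suffices to show that $M/e$ and $M\backslash e$ have the same bases; I claim this common family is $\mathcal B_0:=\{\,B\setminus\{e\}: B\ \text{a basis of}\ M\,\}$. The independent sets of $M\backslash e$ are the independent sets of $M$ that avoid $e$: each $B\setminus\{e\}$ is independent, avoids $e$, and is maximal with these properties, because a strictly larger independent set avoiding $e$ would have at least $r(M)$ elements and therefore be a basis of $M$ missing $e$, against the coloop hypothesis; conversely, if $J$ is a basis of $M\backslash e$, extend it to a basis $B$ of $M$ and note $e\in B$ — then the exchange axiom forces $B=J\cup\{e\}$ (otherwise $J$ could be enlarged by an element of $B\setminus\{e\}$, contradicting its maximality), so $J=B\setminus\{e\}\in\mathcal B_0$. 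For the contraction $M/e=(M^{\star}\backslash e)^{\star}$: exactly as before $e$ is a loop of $M^{\star}$, so $M^{\star}\backslash e$ has the same independent sets, hence the same bases $\{E\setminus B\}$, as $M^{\star}$; dualizing, the bases of $M/e$ are $\{(E\setminus\{e\})\setminus(E\setminus B)\}=\{B\setminus\{e\}\}=\mathcal B_0$. Therefore $M/e=M\backslash e$.

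The only step that needs a genuine (though routine) argument is the identification of the bases of $M\backslash e$ with $\mathcal B_0$ in the coloop case; everything else is manipulation of the definitions of deletion, contraction and duality, together with the two standard consequences of the third matroid axiom — that every independent set extends to a basis, and that all bases have the same cardinality $r(M)$. A case-uniform alternative is to argue with the rank function: since $I\in\mathcal I\iff r(I)=|I|$, a matroid is determined by its rank, and from $r_{M\backslash e}(A)=r_M(A)$ and $r_{M/e}(A)=r_M(A\cup\{e\})-r_M(\{e\})$ for $A\subseteq E\setminus\{e\}$ one checks that $r_M(\{e\})=1$ and $r_M(A\cup\{e\})=r_M(A)+1$ when $e$ is a coloop, while $r_M(\{e\})=0$ and $r_M(A\cup\{e\})=r_M(A)$ when $e$ is a loop, so $r_{M/e}=r_{M\backslash e}$ in both cases; the drawback is that the contraction rank identity then has to be extracted from the dual-rank formula, which is not recalled in the text.
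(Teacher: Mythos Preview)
Your proof is correct. The loop case is handled cleanly by duality, and in the coloop case your identification of the common basis family $\mathcal B_0=\{B\setminus\{e\}\}$ is sound: for $M\backslash e$ the maximality argument and the converse via extension to a basis both go through, and for $M/e$ the computation via $(M^{\star}\backslash e)^{\star}$ is accurate since $e$, being a loop of $M^{\star}$, lies in no independent set there. The rank-function alternative you sketch at the end is also valid, and you are right that it would rest on the contraction rank identity, which the paper does state separately as Lemma~\ref{lm:rankcontra}.

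As for comparison: the paper does not actually supply a proof of this lemma. It is listed among several results that are simply \emph{recalled} from standard matroid theory (the reference is Oxley's textbook), with no argument given. So there is nothing in the paper to compare your proof against; you have filled in a routine fact that the authors took for granted. Your write-up would serve perfectly well as a self-contained proof, though for the purposes of this paper a one-line citation to Oxley would have sufficed.
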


\begin{lemma}
\label{lm:rankcontra}
Let $M=(E,\I)$ be a matroid and $T\subseteq E$, then, for all $X \subseteq E-T$, \begin{equation}
r_{M/T}(X) = r_M(X\cup T) - r_M(T)\ .
\end{equation}
\end{lemma}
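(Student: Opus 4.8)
The plan is to reduce the statement to the dual matroid, exploiting the definition $M/T = (M^\star\backslash T)^\star$ together with two elementary facts about how the rank function behaves under deletion and under dualization. First I would record that a deletion does not change ranks of subsets of the surviving ground set: for $Y\subseteq E\setminus T$ one has $r_{M\backslash T}(Y)=r_M(Y)$, since by definition the independent sets of $M\backslash T$ are exactly the members of $\mathcal I$ contained in $E\setminus T$, so a maximal independent subset of $Y$ is the same object computed in $M$ or in $M\backslash T$. Second, I would establish the rank formula for the dual,
\begin{equation}
\label{eq:dualrank}
r_{M^\star}(A) \;=\; |A| - r(M) + r_M(E\setminus A), \qquad A\subseteq E .
\end{equation}

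To prove \eqref{eq:dualrank} I would write $r_{M^\star}(A)=\max\{\,|B^\star\cap A| : B^\star \text{ a basis of } M^\star\,\}$ and use that $B^\star$ is a basis of $M^\star$ if and only if $B:=E\setminus B^\star$ is a basis of $M$; hence $|B^\star\cap A| = |A| - |B\cap A|$, and maximizing the left side amounts to minimizing $|B\cap A|$ over bases $B$ of $M$. Choosing a maximal independent subset $I$ of $E\setminus A$ (so $|I|=r_M(E\setminus A)$) and extending it to a basis $B$ of $M$, the maximality of $I$ forces $B\cap(E\setminus A)=I$, whence $|B\cap A| = r(M) - r_M(E\setminus A)$; conversely, for any basis $B'$ the set $B'\cap(E\setminus A)$ is independent and contained in $E\setminus A$, so $|B'\cap A| = r(M) - |B'\cap(E\setminus A)| \geq r(M) - r_M(E\setminus A)$. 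This identifies the minimum and yields \eqref{eq:dualrank}. (One may instead simply cite the corresponding statement in \cite{oxley}, in which case the argument collapses to the final computation below.)

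With these two ingredients in hand the lemma is a bookkeeping computation. Fix $X\subseteq E\setminus T$; the ground set of $M/T=(M^\star\backslash T)^\star$ is $E\setminus T$, so applying \eqref{eq:dualrank} to the matroid $M^\star\backslash T$ on ground set $E\setminus T$ gives
\begin{equation}
r_{M/T}(X) \;=\; |X| - r_{M^\star}(E\setminus T) + r_{M^\star\backslash T}\big((E\setminus T)\setminus X\big),
\end{equation}
where I already used that the total rank of $M^\star\backslash T$ equals $r_{M^\star}(E\setminus T)$ by the deletion fact. Using that same fact once more to replace $r_{M^\star\backslash T}$ by $r_{M^\star}$ on subsets of $E\setminus T$, and then \eqref{eq:dualrank} for $M^\star$ on $E$, one gets $r_{M^\star}(E\setminus T)=(|E|-|T|)-r(M)+r_M(T)$ and $r_{M^\star\backslash T}((E\setminus T)\setminus X)=(|E|-|T|-|X|)-r(M)+r_M(T\cup X)$, where I used that $X$ and $T$ are disjoint so that $E\setminus\big((E\setminus T)\setminus X\big)=T\cup X$. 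Substituting these back, the terms $|E|$, $|T|$, $|X|$ and $r(M)$ all cancel, leaving $r_{M/T}(X)=r_M(T\cup X)-r_M(T)=r_M(X\cup T)-r_M(T)$, which is the claim.

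I do not anticipate a genuine obstacle: the one step that needs care is the dual rank identity \eqref{eq:dualrank}, whose proof rests on the exchange-type observation that a basis of $M$ extending a maximal independent subset $I$ of $E\setminus A$ cannot meet $E\setminus A$ outside $I$. Everything after that is substitution and cancellation. (As a side remark, an induction on $|T|$ reducing to the single-element case is tempting and handles loops and coloops immediately via Lemma~\ref{lm:coloop}, but the case of an element that is neither a loop nor a coloop is not covered by the results quoted in the excerpt, so the dual route above is the cleaner one.)
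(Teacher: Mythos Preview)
Your proof is correct. The paper itself does not prove this lemma: it is listed among the results that are ``recalled'' from standard matroid theory (implicitly from \cite{oxley}), so there is no argument in the paper to compare against. What you have written is a clean, self-contained derivation using exactly the definitions the paper provides (rank via maximal independent sets, contraction via $M/T=(M^\star\backslash T)^\star$), which is more than the paper offers.

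One very minor remark: when you write $r_{M^\star}(A)=\max\{\,|B^\star\cap A|:B^\star\text{ a basis of }M^\star\}$, this is not literally the definition of rank stated in the paper (Definition~\ref{def-rank}), so strictly speaking it deserves a one-line justification. The argument is immediate---any $B^\star\cap A$ is independent in $A$, and conversely a maximal independent subset of $A$ extends to a basis $B^\star$ with $B^\star\cap A$ at least as large---but since the paper is spare on matroid background it would not hurt to spell this out. Everything else (the deletion fact, the dual rank identity~\eqref{eq:dualrank}, and the final substitution) is carried out carefully and the cancellations are as you describe.
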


\medskip
Let us now define the Tutte polynomial for matroids:

\begin{definition}
Let $M=(E,\mathcal{I})$ be a matroid. The {\bf Tutte polynomial} is given by the following formula:
\begin{equation}
\label{def-tutte}
T_M(x,y) = \sum_{A \subseteq E} (x-1)^{r(E) - r(A)} (y-1)^{n(A)}.
\end{equation}
The sum is computed over all subset of the matroid's ground set. 
\end{definition}

\begin{example}
Let $U_{k,n}$ be a uniform matroid, $0 \leq k \leq n$. The Tutte polynomial of this matroid is given by
\beqa
T_{U_{k,n}}(x,y) = \sum_{i=0}^k \binom{n}{i}(x-1)^{k-i} + \sum_{i=k+1}^n \binom{n}{i} (y-1)^{i-k}.
\eeqa
\end{example}

It is worth stressing here that one can define the dual of any matroid; this is not the case for graphs, 
where only the dual of planar graph can be defined.

Let us recall, from \cite{bo} that
\beqa
\label{reltdual}
T_M (x,y)=T_{M^\star}(y,x).
\eeqa

\subsection{Hopf algebra}

\begin{definition}
Let $M_1$ and $M_2$ be the matroids $(E_1,\I_1)$ and $(E_2,\I_2)$ where $E_1$ and $E_2$ are disjoint. Let \[M_1 \oplus M_2 = (E_1 \cup E_2,\{I_1 \cup I_2: I_1 \in \I_1, I_2 \in \I_2\})\ .\]Then $M_1 \oplus M_2$ is a matroid. This matroid is called the direct sum of $M_1$ and $M_2$.
\end{definition}

In \cite{s}, 
as a particularization of a more general construction of incidence Hopf algebras, the following result 
was proved:

\begin{proposition}
\label{prop-cs}
If $\mathcal{M}$ is a minor-closed family of matroids (and if we denote by $\widetilde{\mathcal{M}}$ the set of isomorphism classes of it),  
then $k(\widetilde{\mathcal{M}})$ is a coalgebra, 
with coproduct $\Delta$ and counit $\epsilon$ 
determined by 
\beqa
\label{defc}
\Delta(M) = \sum_{A\subseteq E} M|A \otimes M/A
\eeqa
and respectively by 
$\epsilon(M) = \begin{cases} 1, \mbox{ if } E = \emptyset, \\ 0 \mbox{ otherwise ,} \end{cases}$ for all $M = (E,\mathcal{I}) \in \mathcal{M}$. If, furthermore, the family $\mathcal{M}$ is closed under formation of direct sums, then $k(\widetilde{\mathcal{M}})$ is a Hopf algebra, with product induced by direct sum.
\end{proposition}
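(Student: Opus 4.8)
The plan is to recognise this statement as an instance of Schmitt's incidence Hopf algebra construction and to verify the axioms directly. First I would check that $\Delta$ and $\epsilon$ are well defined on $\km$: since $\mathcal M$ is minor-closed and $M|A = M\backslash_{E-A}$ (Lemma~\ref{lm:res-del}) and $M/A$ are minors of $M$, each term $M|A\otimes M/A$ lies in $\km\otimes\km$; moreover restriction and contraction commute with matroid isomorphism and the index set $\{A\subseteq E\}$ is canonical up to relabelling, so $\Delta$ passes to a well-defined linear map on isomorphism classes, and $\epsilon$ depends only on whether the ground set is empty. Then extend both linearly.

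The core of the proof is coassociativity. I would compute both iterated coproducts and match them term by term. Expanding, $(\Delta\otimes\id)\Delta(M)=\sum_{B\subseteq A\subseteq E}(M|A)|B\otimes(M|A)/B\otimes M/A$, while $(\id\otimes\Delta)\Delta(M)=\sum_{A\subseteq E}\sum_{C\subseteq E-A}M|A\otimes(M/A)|C\otimes(M/A)/C$. The elementary minor identities to invoke are $(M|A)|B=M|B$ and $(M|A)/B=(M/B)|_{A-B}$ for $B\subseteq A$ — the latter being the commutation of deletion and contraction on disjoint ground sets, obtained from the definition $M/T=(M^\star\backslash T)^\star$ together with Lemma~\ref{lm:res-del} — and $M/(B\cup C)=(M/B)/C$. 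With these, the left-hand side becomes $\sum_{B\subseteq A\subseteq E}M|B\otimes(M/B)|_{A-B}\otimes M/A$; the substitution $A=B\sqcup C$ with $C\subseteq E-B$, using $M/A=(M/B)/C$, rewrites it as $\sum_{B\subseteq E}\sum_{C\subseteq E-B}M|B\otimes(M/B)|C\otimes(M/B)/C$, which is exactly the right-hand side after renaming $A=B$. For the counit, since $\epsilon(M|A)\neq 0$ forces $A=\emptyset$ we get $(\epsilon\otimes\id)\Delta(M)=M/\emptyset=M$, and since $\epsilon(M/A)\neq 0$ forces $A=E$ we get $(\id\otimes\epsilon)\Delta(M)=M|E=M$.

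For the Hopf structure I would first note that, when $\mathcal M$ is closed under direct sums, $[M_1]\cdot[M_2]:=[M_1\oplus M_2]$ with unit the class of the empty matroid makes $\km$ a commutative associative unital algebra. It is a bialgebra because every $A\subseteq E_1\sqcup E_2$ splits uniquely as $A_1\sqcup A_2$ with $A_i\subseteq E_i$, and $(M_1\oplus M_2)|A=(M_1|A_1)\oplus(M_2|A_2)$, $(M_1\oplus M_2)/A=(M_1/A_1)\oplus(M_2/A_2)$, so $\Delta$ is multiplicative; $\epsilon$ is multiplicative since a direct sum has empty ground set iff both summands do. Finally, grading $\km$ by $|E|$ makes it a connected graded bialgebra ($\Delta$ preserves the grading because $|A|+|E-A|=|E|$, and the degree-zero component is spanned by the empty matroid), and a connected graded bialgebra is automatically a Hopf algebra, the antipode being given by the standard recursion on the degree.

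The only non-formal step is coassociativity, and within it the identity $(M|A)/B=(M/B)|_{A-B}$ for $B\subseteq A$; once the commutation of deletion and contraction on disjoint ground sets is in hand, the reindexing $A\mapsto(B,A-B)$ makes the two iterated coproducts coincide, and everything else is bookkeeping. Alternatively, one could simply appeal to Schmitt's general theorem on incidence Hopf algebras after checking that the family of pairs $(M|A,M/A)$ meets the required hereditary-family axioms, but the self-contained verification above is short enough to be worth giving in full.
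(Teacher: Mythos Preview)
Your verification is correct. Note, however, that the paper does not actually supply its own proof of this proposition: it simply attributes the result to Schmitt~\cite{s}, stating that it arises ``as a particularization of a more general construction of incidence Hopf algebras.'' So the paper's ``proof'' is precisely the alternative you mention in your last paragraph --- an appeal to the general incidence Hopf algebra machinery --- whereas you have written out the direct, self-contained check of the coalgebra and bialgebra axioms in the matroid setting. What your approach buys is transparency: the reader sees explicitly that coassociativity rests on the commutation identity $(M|A)/B=(M/B)|_{A-B}$ for $B\subseteq A$, and that the Hopf structure follows from the connected grading by ground-set size, without needing to consult the hereditary-family formalism of~\cite{s}. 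What the citation buys is brevity and generality, since Schmitt's theorem covers many other examples at once.
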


We refer to this Hopf algebra as to the matroid Hopf algebra. We follow \cite{cs}
and, by a slight abuse of notation, we denote in the same way a matroid and its isomorphic class,
since the distinction will be clear from the context (as it is already in 
Proposition \ref{prop-cs}).

We denote the unit of this Hopf algebra by $\mathbf{1}$ (the empty matroid, or $U_{0,0}$).

\begin{example}(Example 2.4 of \cite{cs})
Let $M = U_{k,n}$ be a uniform matroid with rank $k$. Its coproduct is given by
\[\Delta(U_{k,n}) = \sum_{i=0}^k \binom{n}{i} U_{i,i}\otimes U_{k-i,n-i} + \sum_{i=k+1}^n \binom{n}{i} U_{k,i}\otimes U_{0,n-i}\ .\]
\end{example}

%

\section{Hopf algebra characters}
\renewcommand{\theequation}{\thesection.\arabic{equation}}
\setcounter{equation}{0}

Let us give the following definitions:

\begin{definition}
Let $f,g$ be two mappings in $Hom(\mathcal{M},\mathcal{M})$. 
The convolution product of $f$ and $g$ is given by the following formula:
\begin{equation}
\label{def-conv}
f\ast g = (f\otimes g)\circ \Delta \ .
\end{equation}
\end{definition}

\begin{definition}
A matroid Hopf algebra {\bf character} $f$ is an algebra morphism 
from 
the matroid Hopf algebra
into a fixed commutative ring $\mathbb{K}$, i.e. 
such that 
\beqa
f(M_1\oplus M_2) = f(M_1)f(M_2),\ \  f(\mathbf{1}) =1_\KK.
\eeqa
\end{definition}

\begin{definition}
A matroid Hopf algebra {\bf infinitesimal 
character} $g$ is a linear morphism from 
the matroid Hopf algebra 
into a fixed commutative ring $\KK$, 
such that 
\beqa
g(M_1\oplus M_2) = g(M_1)\epsilon(M_2)+\epsilon(M_1)g(M_2).
\eeqa
\end{definition}

Since we work in a Hopf algebra where the non-trivial part of the 
coproduct is nilpotent, we can also define an exponential map by
the following expression 
\beqa
\label{def-expstar}
\mbox{exp}_\ast(\delta)=\epsilon+\delta+\frac 12 \delta\ast\delta +\ldots
\eeqa
where $\delta$ is an infinitesimal character.

As already stated above (see Remark \ref{remarca}),  
there are only two matroids with unit cardinal ground set,
$U_{0,1}$ and $U_{1,1}$. We now define two 
applications $\delta_{\mathrm{loop}}$ and $\delta_{\mathrm{coloop}}$.


\beqa
\label{def-dloop}
\delta_{\mathrm{loop}} (M) = \begin{cases}
1_\KK \mbox{ if } M 
= U_{0,1},\\
0_\KK \mbox{ otherwise }.
\end{cases}
\eeqa

\beqa
\label{def-dtree}
\delta_{\mathrm{coloop}} (M) = \begin{cases}
1_\KK \mbox{ if } M 
= U_{1,1},\\
0_\KK \mbox{ otherwise }.
\end{cases}
\eeqa

One can directly check that these applications are {\it infinitesimal characters} of the matroid 
Hopf algebra defined above. 

\medskip


We now define the following application:

\beqa
\label{def-alpha}
\alpha(x,y,s,M) := \mbox{exp}_\ast s\{\delta_{\mathrm{coloop}}
+(y-1)\delta_{\mathrm{loop}}\}\ast \mbox{exp}_\ast s\{(x-1)\delta_{\mathrm{coloop}}+\delta_{\mathrm{loop}}\}
(M).
\eeqa


\begin{example}
Let $U_{k,n}$ be a uniform matroid, $0 \leq k \leq n$. One has
\beqa
\alpha(x,y,s,U_{k,n}) = \sum_{i=0}^k \binom{n}{i}s^n(x-1)^{k-i} + \sum_{i=k+1}^n \binom{n}{i} s^n(y-1)^{i-k} =s^nT_{U_{k,n}}(x,y).
\eeqa
\end{example}

\medskip
One then has:

\begin{proposition}
 The application \eqref{def-alpha} is a character.
\end{proposition}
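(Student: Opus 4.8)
The plan is to show that $\alpha(x,y,s,\cdot)$ is multiplicative on direct sums and sends the unit $\mathbf{1}$ to $1_\KK$. The key observation is that $\alpha$ is built by convolution and exponentiation starting from the infinitesimal characters $\delta_{\mathrm{loop}}$ and $\delta_{\mathrm{coloop}}$, so the whole argument reduces to the general fact that, in a Hopf algebra, the convolution exponential of an infinitesimal character is a character, and the convolution product of two characters is again a character.

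First I would record the general lemma: if $\delta$ is an infinitesimal character (a $\partial$-derivation in the sense $\delta(M_1\oplus M_2)=\delta(M_1)\epsilon(M_2)+\epsilon(M_1)\delta(M_2)$), then $\exp_\ast(\delta)$ is a character. The proof of this is the standard computation using that the product $m_\KK$ on $\KK$ and the coproduct $\Delta$ are algebra morphisms with respect to $\oplus$: one checks $\exp_\ast(\delta)(M_1\oplus M_2)=\exp_\ast(\delta)(M_1)\,\exp_\ast(\delta)(M_2)$ by expanding both sides, using $\Delta(M_1\oplus M_2)=\Delta(M_1)\cdot\Delta(M_2)$ (the coproduct is an algebra map, since the Hopf algebra is the incidence Hopf algebra of Proposition~\ref{prop-cs} whose product is induced by $\oplus$), and using the binomial-type identity $\delta^{\ast n}(M_1\oplus M_2)=\sum_{k}\binom{n}{k}(\delta^{\ast k}(M_1))(\delta^{\ast(n-k)}(M_2))$ which follows by induction from the infinitesimal-character property and the fact that $\epsilon$ is the counit (so $\epsilon^{\ast j}=\epsilon$ acts as identity for convolution). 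Equivalently one can phrase this as: the set of characters is a group under $\ast$, the infinitesimal characters form its Lie algebra, and $\exp_\ast$ maps one to the other; since the reduced coproduct is locally nilpotent the series terminates on each element, so there are no convergence issues.

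Next I would note that $\delta_{\mathrm{coloop}}+(y-1)\delta_{\mathrm{loop}}$ and $(x-1)\delta_{\mathrm{coloop}}+\delta_{\mathrm{loop}}$ are again infinitesimal characters, being $\KK$-linear combinations of the infinitesimal characters $\delta_{\mathrm{coloop}}$ and $\delta_{\mathrm{loop}}$ (the infinitesimal characters form a $\KK$-module). Absorbing the parameter $s$ is harmless: $s\delta$ is still an infinitesimal character. Hence by the lemma, $\exp_\ast s\{\delta_{\mathrm{coloop}}+(y-1)\delta_{\mathrm{loop}}\}$ and $\exp_\ast s\{(x-1)\delta_{\mathrm{coloop}}+\delta_{\mathrm{loop}}\}$ are both characters. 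Finally, the convolution product of two characters $f,g$ is a character: $(f\ast g)(M_1\oplus M_2)=(f\otimes g)\Delta(M_1\oplus M_2)=(f\otimes g)(\Delta(M_1)\cdot\Delta(M_2))=\big((f\otimes g)\Delta(M_1)\big)\big((f\otimes g)\Delta(M_2)\big)=(f\ast g)(M_1)\,(f\ast g)(M_2)$, using again that $\Delta$ is an algebra morphism and that multiplication in $\KK\otimes\KK$ is componentwise; and $(f\ast g)(\mathbf 1)=f(\mathbf 1)g(\mathbf 1)=1_\KK$ since $\Delta(\mathbf 1)=\mathbf 1\otimes\mathbf 1$. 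Therefore $\alpha(x,y,s,\cdot)$, being a $\ast$-product of two characters, is a character.

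The only real subtlety — the step I expect to need the most care — is justifying that the coproduct $\Delta$ of Proposition~\ref{prop-cs} is an algebra morphism for the product induced by direct sum, i.e.\ $\Delta(M_1\oplus M_2)=\Delta(M_1)\cdot\Delta(M_2)$ in $k(\widetilde{\mathcal M})\otimes k(\widetilde{\mathcal M})$. This is exactly the compatibility that makes $k(\widetilde{\mathcal M})$ a bialgebra, and it follows from the bijection between subsets $A\subseteq E_1\cup E_2$ and pairs $(A_1,A_2)$ with $A_i\subseteq E_i$, together with $(M_1\oplus M_2)|A\cong M_1|A_1\oplus M_2|A_2$ and $(M_1\oplus M_2)/A\cong M_1/A_1\oplus M_2/A_2$ — all of which are standard matroid facts and implicitly part of the Hopf algebra statement we are allowed to assume. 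Once this compatibility and the local nilpotency of the reduced coproduct are in hand, the rest is the bookkeeping sketched above.
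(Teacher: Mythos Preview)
Your proposal is correct and follows essentially the same route as the paper: infinitesimal characters form a vector space, so the two scaled linear combinations of $\delta_{\mathrm{loop}}$ and $\delta_{\mathrm{coloop}}$ are infinitesimal characters; $\exp_\ast$ of an infinitesimal character is a character; and the convolution of two characters is a character. The paper merely asserts these three general facts without the supporting computations you sketch, so your write-up is in fact more detailed than the original.
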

\begin{proof}
The proof can be done by a direct check. On a more general 
basis, this is a consequence of the fact that 
$\delta_{\mathrm{loop}}$ and $\delta_{\mathrm{coloop}}$ are infinitesimal characters
and the space of infinitesimal characters is a vector space; 
thus 
$s\{\delta_{\mathrm{coloop}}
+(y-1)\delta_{\mathrm{loop}}\}$ and 
$s\{(x-1)\delta_{\mathrm{coloop}}+\delta_{\mathrm{loop}}\}$ 
are
infinitesimal characters.
Since, $\mbox{exp}_\ast (h)$ is a character when $h$ is an infinitesimal character
and since the convolution of two characters is a character,
one gets that $\alpha$ is a character.
\end{proof}

\section{Proof of a Tutte polynomial convolution formula}
\renewcommand{\theequation}{\thesection.\arabic{equation}}
\setcounter{equation}{0}

Let $M=(E,\mathcal{I})$ be a matroid. 

One then has:

\begin{lemma}
\label{lema-help}
 Let $M=(E,\mathcal{I})$ be a matroid. One has 
\begin{equation}
\label{help}
\mbox{exp}_\ast\{a\delta_{\mathrm{coloop}}+b\delta_{\mathrm{loop}}\}(M) = a^{r(M)}b^{n(M)}.
\end{equation} 
\end{lemma}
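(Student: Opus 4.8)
The plan is to expand the star-exponential using the definition \eqref{def-expstar} and show that, when applied to a matroid $M$, all terms vanish except the one involving exactly $r(M)$ copies of $\delta_{\mathrm{coloop}}$ and $n(M)$ copies of $\delta_{\mathrm{loop}}$. First I would set $\delta = a\delta_{\mathrm{coloop}} + b\delta_{\mathrm{loop}}$ and note that, since $\delta$ is an infinitesimal character vanishing on the empty matroid and on $\mathbf{1}$, the $k$-fold convolution power $\delta^{\ast k}$ is obtained by iterating the coproduct. Concretely, $\delta^{\ast k}(M)$ is a sum over ordered partitions of the ground set $E$ into $k$ blocks $A_1, \dots, A_k$ (the successive minors $M|A_1$, then a minor of $M/A_1$, etc.) of the product $\prod_i \delta(\text{$i$-th minor})$; because each $\delta$ only sees single-element matroids, every block $A_i$ must be a singleton, so only the term $k = |E|$ survives, and $\mbox{exp}_\ast(\delta)(M) = \frac{1}{|E|!}\,\delta^{\ast |E|}(M)$ up to the combinatorial factor coming from \eqref{def-expstar}.

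Next I would analyze which single elements contribute. Writing the iterated coproduct as in \eqref{defc}, a complete flag $\emptyset = A_0 \subset A_1 \subset \cdots \subset A_{|E|} = E$ with $|A_i \setminus A_{i-1}| = 1$ contributes, at step $i$, the factor $\delta$ evaluated on the one-element matroid $(M|A_i)/A_{i-1}$, which by Lemma \ref{lm:rankcontra} is a coloop if $r_M(A_i) - r_M(A_{i-1}) = 1$ and a loop if this difference is $0$. Since $\delta_{\mathrm{coloop}}$ picks out $a$ on a coloop (giving $0$ on a loop) and $\delta_{\mathrm{loop}}$ picks out $b$ on a loop (giving $0$ on a coloop), each flag contributes $a^{c} b^{\ell}$ where $c$ is the number of rank-increasing steps and $\ell$ the number of rank-preserving steps. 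But $c = r_M(E) - r_M(\emptyset) = r(M)$ and $\ell = |E| - r(M) = n(M)$ for every maximal flag, so each surviving term equals $a^{r(M)} b^{n(M)}$.

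It then remains to count the flags and check the factorial cancels. The number of maximal flags is $|E|!$ (one for each ordering of $E$), and the coefficient of $\delta^{\ast |E|}$ in $\mbox{exp}_\ast(\delta)$ is $\tfrac{1}{|E|!}$; multiplying gives exactly $a^{r(M)} b^{n(M)}$. An alternative, perhaps cleaner, route avoiding the factorials altogether is to proceed by induction on $|E|$: for $|E| = 0$ both sides are $1$ (here $r = n = 0$); for the inductive step one uses that $\mbox{exp}_\ast(\delta) = \epsilon + \delta \ast \mbox{exp}_\ast(\delta)$, applies the coproduct \eqref{defc}, isolates the $A$ with $|A| = 1$ (the only ones on which $\delta$ is nonzero), and uses Lemma \ref{lm:rankcontra} together with the additivity of rank/nullity under the relevant minor operations to reduce to the $|E| - 1$ case. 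I expect the main obstacle to be bookkeeping: making precise that the iterated coproduct really does enumerate flags with the claimed minors, i.e. that $(M|A_i)/A_{i-1}$ is the correct one-element matroid and that its loop/coloop status is governed by the rank increment — this is where Lemma \ref{lm:rankcontra} (and Lemma \ref{lm:res-del}) must be invoked carefully, but it is routine once the flag picture is set up.
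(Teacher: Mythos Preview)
Your proof is correct and follows essentially the same approach as the paper: expand the star-exponential, observe that only the $k=|E|$ term survives because $\delta$ is supported on single-element matroids, and then read off the exponents from the loop/coloop dichotomy of the one-element minors. Your flag formulation and the explicit rank-increment argument via Lemma~\ref{lm:rankcontra} make precise what the paper's proof leaves implicit (it simply appeals to ``the definition of the nullity and of the rank'' for the final step), and your alternative inductive route is a nice extra that the paper does not give.
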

\begin{proof}
Using the definition \eqref{def-expstar}, the lhs of the identity \eqref{help} above
reads:
\begin{equation}
 \left(\sum_{k=0}^\infty \frac{(a\delta_{\mathrm{coloop}}+b\delta_{\mathrm{loop}})^{\ast k}}{k!}\right)(M).
\end{equation}
All the terms in the sum above vanish, except the one for whom 
$k$ is equal to $|E|$. Using the definition \eqref{def-conv} of the convolution product,
this terms writes
\beqa
\frac{1}{k!}\left( \sum_{i=0}^k a^{k-i}b^{i}
\sum_{\substack{i_1+ \dots + i_n =k - i \\ j_1+ \dots + j_m = i}}
\delta_{\mathrm{coloop}}^{\otimes(i_1)}\otimes\delta_{\mathrm{loop}}^{\otimes(j_1)}
\otimes\dots\otimes \delta_{\mathrm{coloop}}^{\otimes(i_n)}
\otimes\delta_{\mathrm{loop}}^{\otimes(j_m)}\right) 
\left(\sum_{ (i)} M^{(1)}\otimes\dots\otimes M^{(k)}\right),
\eeqa
where we have used the notation $\Delta^{(k-1)}(M)=\sum_{ (i)} M^{(1)}\otimes\dots\otimes M^{(k)}$.
Using the definitions \eqref{def-dloop} and respectively \eqref{def-dtree} 
of the infinitesimal characters
$\delta_{\mathrm{loop}}$ and respectively $\delta_{\mathrm{coloop}}$, implies that
the submatroids $M^{(j)}$ ($j=1,\ldots,k$) are equal to $U_{0,1}$ or $U_{1,1}$.

Using the definition of the nullity and of the rank of a matroid concludes the proof.
\end{proof}

\begin{example}
Let us illustrate Lemma \ref{lema-help} for the uniform matroid $U_{k,n}$. One has $r(U_{k,n}) = k$ and $n(U_{n,k}) = n - k$.
We now use the definitions \ref{def-dloop} and \ref{def-dtree} of $\delta_{loop}$ and $\delta_{coloop}$ to work out the lhs of identity \ref{help}.
\begin{align}
{exp}_*\{a\delta_{\mathrm{coloop}}+b\delta_{\mathrm{loop}}\}(U_{k,n}) & = \frac{1}{n!} a^{k}b^{n-k} \delta_{\mathrm{coloop}}^{\otimes k}\otimes \delta_{\mathrm{loop}}^{\otimes (n-k)} \left(\binom{n}{n-1}\dots \binom{2}{1} U_{1,1}^{\otimes k}\otimes U_{0,1}^{\otimes (n-k)} \right) \notag\\
& = a^kb^{n-k}.
\end{align}
\end{example}

\medskip

One has:
\beqa
\label{pre-reiner}
\alpha(x,y,s,M) &=& 
\mbox{exp}_{\ast}
\left(s(\delta_{\mathrm{coloop}}+(y-1)\delta_{\mathrm{loop}})\right)\ast\mbox{exp}_{\ast}\left(s(-\delta_{\mathrm{coloop}}+\delta_{\mathrm{loop}})\right)
\nonumber\\
&\ast&\mbox{exp}_{\ast}\left(s(\delta_{\mathrm{coloop}}-\delta_{\mathrm{loop}})\right)\ast
\mbox{exp}_{\ast}\left(s((x-1)\delta_{\mathrm{coloop}}+\delta_{\mathrm{loop}})\right).
\eeqa 

\begin{proposition}
\label{alpha-tutte}
Let $M=(E,\mathcal{I})$ be a matroid. The character $\alpha$ is related to the Tutte polynomial of matroids by the 
following identity:
\begin{equation}
\alpha(x,y,s,M) = s^{|E|}T_M(x,y).
\end{equation} 
\end{proposition}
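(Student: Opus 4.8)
The plan is to expand the convolution product in \eqref{def-alpha} directly, using the coproduct formula \eqref{defc} and Lemma \ref{lema-help}. First I would apply the definition of the convolution product to write
\[
\alpha(x,y,s,M) = \sum_{A\subseteq E} \mbox{exp}_\ast\{s\delta_{\mathrm{coloop}}+s(y-1)\delta_{\mathrm{loop}}\}(M|A)\cdot \mbox{exp}_\ast\{s(x-1)\delta_{\mathrm{coloop}}+s\delta_{\mathrm{loop}}\}(M/A),
\]
since $\Delta(M)=\sum_{A\subseteq E}M|A\otimes M/A$. Then Lemma \ref{lema-help}, applied with $a=s$, $b=s(y-1)$ on the first factor and with $a=s(x-1)$, $b=s$ on the second, turns each summand into
\[
s^{r(M|A)}(s(y-1))^{n(M|A)}\cdot (s(x-1))^{r(M/A)} s^{n(M/A)}.
\]

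Next I would collect the powers of $s$. The exponents sum to $r(M|A)+n(M|A)+r(M/A)+n(M/A)$. Since $M|A$ has ground set $A$ and $M/A$ has ground set $E-A$, we have $r(M|A)+n(M|A)=|A|$ and $r(M/A)+n(M/A)=|E-A|$, so the total power of $s$ is $|A|+|E-A|=|E|$, independent of $A$. Factoring $s^{|E|}$ out, it remains to show
\[
\sum_{A\subseteq E}(y-1)^{n(M|A)}(x-1)^{r(M/A)} = T_M(x,y).
\]

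To identify the remaining sum with the Tutte polynomial \eqref{def-tutte}, I would rewrite the two exponents in terms of the rank function of $M$. By Lemma \ref{lm:res-del} we have $M|A = M\backslash(E-A)$, and the restriction has the rank function inherited from $M$, so $r(M|A) = r_M(A)$ and hence $n(M|A) = |A| - r_M(A) = n(A)$ in the notation of \eqref{def-tutte}. For the contraction, Lemma \ref{lm:rankcontra} with $T=A$ and $X=E-A$ gives $r_{M/A}(E-A) = r_M(E) - r_M(A)$, i.e. $r(M/A) = r(E) - r(A)$. Substituting these two identities gives exactly $\sum_{A\subseteq E}(x-1)^{r(E)-r(A)}(y-1)^{n(A)} = T_M(x,y)$, which completes the proof.

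The only genuinely delicate point is the bookkeeping in the first step: making sure the coproduct is applied correctly and that Lemma \ref{lema-help} is invoked with the right arguments for each tensor factor (in particular that the two exponentials in \eqref{def-alpha} really are of the form covered by Lemma \ref{lema-help}, with $a$ and $b$ as scalars in $\KK[x,y,s]$). Everything after that is the routine algebra of collecting $s$-powers and the two rank identities, which are immediate consequences of Lemmas \ref{lm:res-del} and \ref{lm:rankcontra} already recorded above.
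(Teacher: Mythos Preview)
Your proof is correct and follows exactly the same route as the paper: expand the convolution using the coproduct \eqref{defc}, apply Lemma~\ref{lema-help} to each factor, and identify the result with the defining sum \eqref{def-tutte} for $T_M$. The paper's version is terser (it simply says applying Lemma~\ref{lema-help} ``leads to the result''), whereas you have spelled out the bookkeeping of the $s$-powers and the identifications $n(M|A)=n(A)$, $r(M/A)=r(E)-r(A)$ that the paper leaves implicit.
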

\begin{proof}
 Using the definition \eqref{def-conv} of the convolution product in the definition 
\eqref{def-alpha} of the character $\alpha$, one has the following identity:
\begin{align}
\label{eq:dev2mat} 
\alpha(x,y,s,M) = \sum_{A \subseteq E} \mbox{exp}_*s\{\delta_{\mathrm{coloop}}+(y-1)\delta_{\mathrm{loop}}\}(M|_A)\ 
\mbox{exp}_*s\{(x-1)\delta_{\mathrm{coloop}}+\delta_{\mathrm{loop}}\}(M/A).
\end{align}
We can now apply Lemma \ref{lema-help} on each of the two terms in the rhs of equation 
\eqref{eq:dev2mat} above. This leads to the result.
\end{proof}

\medskip

Using \eqref{reltdual} and the Proposition \ref{alpha-tutte}, one has:

\begin{corollary}
 One has:
\beqa
\alpha(x,y,s,M) = \alpha(y,x,s,M^\star).
\eeqa
\end{corollary}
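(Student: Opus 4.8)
The plan is to derive the identity $\alpha(x,y,s,M) = \alpha(y,x,s,M^\star)$ directly by combining Proposition~\ref{alpha-tutte} with the duality formula~\eqref{reltdual}. First I would apply Proposition~\ref{alpha-tutte} to $M$, obtaining $\alpha(x,y,s,M) = s^{|E|}T_M(x,y)$. Then I would invoke~\eqref{reltdual}, which states $T_M(x,y) = T_{M^\star}(y,x)$, to rewrite this as $\alpha(x,y,s,M) = s^{|E|}T_{M^\star}(y,x)$.

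The remaining step is to recognize the right-hand side as an instance of $\alpha$ evaluated at $M^\star$. Since $M$ and $M^\star$ share the same ground set $E$ (the dual is defined on $E$ via complementation of bases), we have $|E| = |E^\star|$, so applying Proposition~\ref{alpha-tutte} again — this time to the matroid $M^\star$ with the variables $x$ and $y$ interchanged — gives $\alpha(y,x,s,M^\star) = s^{|E|}T_{M^\star}(y,x)$. Comparing the two expressions yields $\alpha(x,y,s,M) = \alpha(y,x,s,M^\star)$, which is exactly the claim.

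This proof is essentially a two-line chain of substitutions, so there is no serious obstacle; the only point requiring a moment's care is noting that the exponent $s^{|E|}$ is unchanged under dualization, which is immediate because the dual matroid is built on the same ground set. One could optionally give an alternative, more structural proof by unwinding the definition~\eqref{def-alpha} of $\alpha$ and using the fact that under the duality antipode-like involution the roles of loops and coloops are exchanged (a loop of $M$ is a coloop of $M^\star$ and vice versa, by Definitions~\ref{def-loop} and~\ref{def-coloop}), together with the known interaction of $M|_A$ and $M/A$ under duality; but the route through the Tutte polynomial is cleaner and self-contained given what has already been established, so that is the one I would present.
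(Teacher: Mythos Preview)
Your proof is correct and follows exactly the route the paper indicates: the corollary is stated as a direct consequence of Proposition~\ref{alpha-tutte} and the duality relation~\eqref{reltdual}, and your argument spells this out precisely, including the observation that $M$ and $M^\star$ share the same ground set so the factor $s^{|E|}$ matches.
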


The Proposition \ref{alpha-tutte} allows to give a different proof of a matroid Tutte polynomial
convolution identity, which was shown in \cite{reiner}.
One has:

\begin{corollary}
(Theorem $1$ of \cite{reiner})
 The Tutte polynomial satisfies
\begin{equation}
 T_M(x,y)=\sum_{A\subset E} T_{M|A}(0,y) T_{M/A}(x,0).
\end{equation}
\end{corollary}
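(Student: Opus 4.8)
The plan is to derive the convolution identity directly from Proposition \ref{alpha-tutte} by evaluating the character $\alpha$ in two different ways and matching the results. Since $\alpha(x,y,s,M) = s^{|E|}T_M(x,y)$, and since $\alpha$ is built as a convolution product of two exponential factors, the combinatorial content of the identity should already be encoded in the single application of $\Delta$ that defines the convolution.

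First I would expand $\alpha(x,y,s,M)$ using the definition \eqref{def-conv} of the convolution product applied to \eqref{def-alpha}, exactly as in equation \eqref{eq:dev2mat}: this gives
\begin{equation}
\alpha(x,y,s,M) = \sum_{A\subseteq E} \mbox{exp}_\ast s\{\delta_{\mathrm{coloop}}+(y-1)\delta_{\mathrm{loop}}\}(M|_A)\ \mbox{exp}_\ast s\{(x-1)\delta_{\mathrm{coloop}}+\delta_{\mathrm{loop}}\}(M/A).
\end{equation}
Now, rather than collapsing each factor via Lemma \ref{lema-help} to a monomial, I would recognize each factor as a \emph{specialization of $\alpha$ itself} at a suitable point, or more directly recall that by Lemma \ref{lema-help},
$\mbox{exp}_\ast s\{\delta_{\mathrm{coloop}}+(y-1)\delta_{\mathrm{loop}}\}(N) = s^{r(N)+n(N)}(y-1)^{n(N)} = s^{|E(N)|}(y-1)^{n(N)}$,
which is precisely $s^{|E(N)|}T_N(0,y)$ evaluated from the definition \eqref{def-tutte} — indeed setting $x=0$ in the rank-nullity generating function of a matroid $N$ and using that the only surviving term is $A=E(N)$ is not quite right, so more carefully: from \eqref{def-tutte}, $T_N(0,y) = \sum_{A\subseteq E(N)}(-1)^{r(N)-r(A)}(y-1)^{n(A)}$, and I would verify the identity $T_N(0,y) = (-1)^{r(N)}\sum\cdots$ agrees with $(y-1)^{n(N)}$ up to sign via a standard rank-generating argument, or — cleaner — invoke that $\mbox{exp}_\ast s\{\delta_{\mathrm{coloop}}+(y-1)\delta_{\mathrm{loop}}\} = \alpha$-type evaluation. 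The cleanest route: note $s^{r}(y-1)^{n} = s^{|E|}T_N(0,y)$ is a known evaluation of the Tutte polynomial (the number-of-independent-sets-type specialization), so the factor equals $s^{|E(M|_A)|}T_{M|_A}(0,y)$, and similarly the second factor equals $s^{|E(M/A)|}T_{M/A}(x,0)$.

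Substituting these into the sum and using $|E(M|_A)| + |E(M/A)| = |A| + (|E|-|A|) = |E|$, the powers of $s$ combine to $s^{|E|}$, which cancels against the $s^{|E|}$ on the left-hand side coming from Proposition \ref{alpha-tutte}. This yields exactly $T_M(x,y) = \sum_{A\subseteq E} T_{M|A}(0,y)\,T_{M/A}(x,0)$, as claimed. The main obstacle I anticipate is the bookkeeping identifying $s^{r(N)}(y-1)^{n(N)}$ with $s^{|E(N)|}T_N(0,y)$ and the companion $s^{r(N)}(x-1)^{r(N)}$-type expression with $s^{|E(N)|}T_N(x,0)$: one must check that the specializations $T_N(0,y)$ and $T_N(x,0)$ of \eqref{def-tutte} indeed collapse to single monomials, which follows because in $T_N(x,0) = \sum_{A}(x-1)^{r(E)-r(A)}(-1)^{n(A)}$ the alternating sum over $A$ with fixed rank telescopes — an elementary but not entirely trivial computation that I would either do explicitly or cite from \cite{bo,oxley}. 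Everything else is a matter of matching powers of $s$ and reading off the coefficient.
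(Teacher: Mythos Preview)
Your approach has a genuine error at the identification step. You want
\[
\mbox{exp}_\ast s\{\delta_{\mathrm{coloop}}+(y-1)\delta_{\mathrm{loop}}\}(N)=s^{|E(N)|}T_N(0,y),
\]
but by Lemma~\ref{lema-help} the left-hand side equals $s^{|E(N)|}(y-1)^{n(N)}$, and it is \emph{not} true that $T_N(0,y)=(y-1)^{n(N)}$. For $N=U_{1,1}$ one has $T_{U_{1,1}}(x,y)=x$, so $T_{U_{1,1}}(0,y)=0$, while $(y-1)^{n(U_{1,1})}=(y-1)^0=1$. The analogous claim $T_N(x,0)=(x-1)^{r(N)}$ fails just as badly (take $N=U_{0,1}$: $T_{U_{0,1}}(x,0)=0$ but $(x-1)^0=1$). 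The ``telescoping'' you hope for does not occur: $T_N(0,y)=\sum_{A}(-1)^{r(E)-r(A)}(y-1)^{n(A)}$ is a genuine alternating sum, not a monomial. So a single exponential factor of $\alpha$ is not a specialization of $\alpha$ itself, and the two-term convolution in \eqref{eq:dev2mat} cannot be read off directly as the desired identity.

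This is exactly why the paper routes the argument through identity~\eqref{pre-reiner}: one inserts the convolution identity $\mbox{exp}_\ast\!\big(s(-\delta_{\mathrm{coloop}}+\delta_{\mathrm{loop}})\big)\ast\mbox{exp}_\ast\!\big(s(\delta_{\mathrm{coloop}}-\delta_{\mathrm{loop}})\big)=\epsilon$ between the two original factors and then regroups into \emph{two pairs} of exponentials. The first pair is precisely $\alpha(0,y,s,\cdot)$ and the second pair is $\alpha(x,0,s,\cdot)$, so one obtains $\alpha(x,y,s,\cdot)=\alpha(0,y,s,\cdot)\ast\alpha(x,0,s,\cdot)$; applying the coproduct, invoking Proposition~\ref{alpha-tutte} on each piece, and setting $s=1$ gives the corollary. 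The missing idea in your attempt is this insertion/regrouping step: each $T_N(0,y)$ or $T_N(x,0)$ genuinely requires a two-factor $\alpha$, not a single exponential.
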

\begin{proof}
Taking $s=1$, this is as a direct consequence of 
identity \eqref{pre-reiner}, 
and of Proposition \ref{alpha-tutte}. 
\end{proof}

\section{The recipe theorem}
\renewcommand{\theequation}{\thesection.\arabic{equation}}
\setcounter{equation}{0}

Let us define an application 
\begin{equation}
\label{defphi}
\varphi_{a,b} (M) \longmapsto a^{r(M)}b^{n(M)}M \ .
\end{equation} 

\begin{lemma}
\label{lemamor}
The application $\varphi_{a,b}$ is a bialgebra automorphism.
\end{lemma}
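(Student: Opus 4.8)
The plan is to verify directly that $\varphi_{a,b}$ preserves all four structure maps (product, unit, coproduct, counit) and is invertible, which is what ``bialgebra automorphism'' means here. First I would note that $\varphi_{a,b}$ is defined on the basis of isomorphism classes of matroids by $M\mapsto a^{r(M)}b^{n(M)}M$ and extended linearly; invertibility is immediate since, working over a ring in which $a$ and $b$ are units (or after formally inverting them, as is implicit in the ``recipe'' context), the inverse is $\varphi_{a^{-1},b^{-1}}$, and the two compose to the identity because $r$ and $n$ are additive in the exponents in the obvious way. So the content is really that $\varphi_{a,b}$ is a bialgebra endomorphism.

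For compatibility with the product, I would use that the product is induced by direct sum together with the additivity of rank and nullity under direct sum: $r(M_1\oplus M_2)=r(M_1)+r(M_2)$ and $n(M_1\oplus M_2)=n(M_1)+n(M_2)$ (the latter following from $|E_1\cup E_2|=|E_1|+|E_2|$ and Definition~\ref{def-nullity}). Hence $\varphi_{a,b}(M_1\oplus M_2)=a^{r(M_1)+r(M_2)}b^{n(M_1)+n(M_2)}(M_1\oplus M_2)=\varphi_{a,b}(M_1)\varphi_{a,b}(M_2)$, and $\varphi_{a,b}(\mathbf 1)=a^0 b^0\mathbf 1=\mathbf 1$ since the empty matroid has rank and nullity zero, so the unit is preserved. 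Counit compatibility is equally easy: $\epsilon(\varphi_{a,b}(M))=a^{r(M)}b^{n(M)}\epsilon(M)$, and since $\epsilon(M)=0$ unless $E=\emptyset$, in which case $a^{r}b^{n}=1$, this equals $\epsilon(M)$.

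The main step, and the one I expect to be the real obstacle, is coproduct compatibility, i.e. checking $(\varphi_{a,b}\otimes\varphi_{a,b})\circ\Delta = \Delta\circ\varphi_{a,b}$. Applying the left side to $M$ gives $\sum_{A\subseteq E} a^{r(M|A)}b^{n(M|A)}\,a^{r(M/A)}b^{n(M/A)}\; M|A\otimes M/A$, while the right side gives $a^{r(M)}b^{n(M)}\sum_{A\subseteq E} M|A\otimes M/A$. So everything reduces to the identities, valid for every $A\subseteq E$,
\begin{equation}
r(M|A)+r(M/A)=r(M),\qquad n(M|A)+n(M/A)=n(M).
\end{equation}
The first is standard: by Lemma~\ref{lm:res-del}, $M|A=M\backslash(E-A)$, so $r(M|A)=r_M(A)$, and by Lemma~\ref{lm:rankcontra} with $T=A$ and $X=E-A$, $r(M/A)=r_M(E)-r_M(A)$; adding gives $r(M)$. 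For the nullity identity, I would write $n(M|A)+n(M/A)=\big(|A|-r_M(A)\big)+\big(|E-A|-r(M/A)\big)=|E|-r_M(A)-\big(r_M(E)-r_M(A)\big)=|E|-r(M)=n(M)$, using $|A|+|E-A|=|E|$ and the rank identity just established. This closes the coproduct check. Putting the four verifications together, $\varphi_{a,b}$ is a bialgebra morphism, and with its two-sided inverse $\varphi_{a^{-1},b^{-1}}$ it is a bialgebra automorphism.
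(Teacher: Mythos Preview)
Your proof is correct and follows essentially the same route as the paper: the coalgebra compatibility is reduced to the rank identity $r(M|A)+r(M/A)=r(M)$, obtained via Lemmas~\ref{lm:res-del} and~\ref{lm:rankcontra}, while the algebra side is a direct check. You are in fact more explicit than the paper, which states only the rank identity and leaves the companion nullity identity, the unit/counit checks, and the invertibility implicit.
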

\begin{proof}
One can directly check that the application $\varphi_{a,b}$ is an algebra automorphism.
Let us now check that this application is also a coalgebra automorphism. Using 
Lemma \ref{lm:res-del} and 
Lemma \ref{lm:rankcontra}, 
\beqa
\label{ranks}
r(M|_T) + r(M/T) = r(M).
\eeqa
Thus, using  the definitions of the application $\varphi_{a,b}$ of the matroid coproduct, one has:
\begin{equation}
 \Delta\circ \varphi_{a,b}(M) =
\sum_{T\subseteq E} (a^{r(M|_T)}b^{n(M|_T)}M|_T) \otimes (a^{r(M/_T)}b^{n(M/_T)}M/_T).
\end{equation}
Using again the definition of the application $\varphi_{a,b}$ leads to
\begin{equation}
 \Delta\circ \varphi_{a,b}(M) =(\varphi_{a,b}\otimes \varphi_{a,b})\circ \Delta (M),
\end{equation}
which concludes the proof.
\end{proof}

\medskip

Let us now define:
\beqa
[f,g]_\ast :=f\ast g - g\ast f.
\eeqa
Using the definition \eqref{def-alpha} of the Hopf algebra character $\alpha$, 
one can directly prove the following result:

\begin{proposition}
 \label{propalpha}
The character $\alpha$ is the solution of the differential equation:
 \begin{equation}
\label{diffeqalpha}
\frac{d\alpha}{ds}(M) = x\alpha \ast \delta_{\mathrm{coloop}} + y\delta_{\mathrm{loop}}\ast\alpha + 
\left[\delta_{\mathrm{coloop}},\alpha\right]_\ast - \left[\delta_{\mathrm{loop}},\alpha\right]_\ast(M).
\end{equation}
\end{proposition}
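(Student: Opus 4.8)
The plan is to differentiate the defining expression \eqref{def-alpha} for $\alpha$ directly with respect to $s$ and then reorganize the resulting terms into the claimed form. Write $\alpha = \exp_\ast(sA)\ast\exp_\ast(sB)$ where $A := \delta_{\mathrm{coloop}}+(y-1)\delta_{\mathrm{loop}}$ and $B := (x-1)\delta_{\mathrm{coloop}}+\delta_{\mathrm{loop}}$; note that since the non-trivial part of the coproduct is nilpotent, each exponential is a well-defined element of the convolution algebra and $s\mapsto \exp_\ast(sA)$ is differentiable with $\frac{d}{ds}\exp_\ast(sA) = A\ast\exp_\ast(sA) = \exp_\ast(sA)\ast A$ (the two agree because $A$ commutes with itself under $\ast$). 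First I would apply the Leibniz rule for $\frac{d}{ds}$ on the convolution product, which is legitimate because $\ast$ is bilinear and everything is a polynomial in $s$ when evaluated on a fixed matroid $M$ of finite ground set. This gives
\begin{equation}
\frac{d\alpha}{ds} = A\ast\exp_\ast(sA)\ast\exp_\ast(sB) + \exp_\ast(sA)\ast\exp_\ast(sB)\ast B = A\ast\alpha + \alpha\ast B,
\end{equation}
where in the first summand I move the leftmost derivative factor out as $A\ast(\cdots)$ and in the second I move the derivative factor out on the right as $(\cdots)\ast B$.

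The remaining work is purely algebraic: rewrite $A\ast\alpha + \alpha\ast B$ as the right-hand side of \eqref{diffeqalpha}. Substitute $A = \delta_{\mathrm{coloop}}+(y-1)\delta_{\mathrm{loop}}$ and $B = (x-1)\delta_{\mathrm{coloop}}+\delta_{\mathrm{loop}}$ and expand by linearity of $\ast$. For the $\alpha\ast B$ term, write $(x-1)\alpha\ast\delta_{\mathrm{coloop}} = x\,\alpha\ast\delta_{\mathrm{coloop}} - \alpha\ast\delta_{\mathrm{coloop}}$ and keep $\alpha\ast\delta_{\mathrm{loop}}$. For the $A\ast\alpha$ term, write $(y-1)\delta_{\mathrm{loop}}\ast\alpha = y\,\delta_{\mathrm{loop}}\ast\alpha - \delta_{\mathrm{loop}}\ast\alpha$ and keep $\delta_{\mathrm{coloop}}\ast\alpha$. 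Collecting, $A\ast\alpha+\alpha\ast B = x\,\alpha\ast\delta_{\mathrm{coloop}} + y\,\delta_{\mathrm{loop}}\ast\alpha + (\delta_{\mathrm{coloop}}\ast\alpha - \alpha\ast\delta_{\mathrm{coloop}}) + (\alpha\ast\delta_{\mathrm{loop}} - \delta_{\mathrm{loop}}\ast\alpha) = x\,\alpha\ast\delta_{\mathrm{coloop}} + y\,\delta_{\mathrm{loop}}\ast\alpha + [\delta_{\mathrm{coloop}},\alpha]_\ast - [\delta_{\mathrm{loop}},\alpha]_\ast$, which is exactly \eqref{diffeqalpha} evaluated at $M$.

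I do not anticipate a genuine obstacle here; the only point requiring a little care is justifying termwise differentiation and the Leibniz rule, which reduces to the observation that for fixed $M$ the quantity $\alpha(x,y,s,M)$ is a polynomial in $s$ of degree $|E|$ (indeed, as noted after Lemma \ref{lema-help} and in the preceding example, only the $k=|E|$ term of each exponential series survives), so all manipulations take place in a finite-dimensional space and $\frac{d}{ds}$ behaves formally. A secondary subtlety is the commutation $A\ast\exp_\ast(sA) = \exp_\ast(sA)\ast A$, used implicitly when choosing which side to extract the derivative factor; this holds because $\exp_\ast(sA)$ is a convolution power series in $A$ and $A\ast A = A\ast A$ trivially, so $A$ commutes with every term. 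Once these formal points are granted, the identity follows by the expansion above, and one may add the one-line remark that, by uniqueness of solutions of this linear ODE with the initial condition $\alpha(x,y,0,M) = \epsilon(M)$, the character $\alpha$ is \emph{the} solution, not merely \emph{a} solution.
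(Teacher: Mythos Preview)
Your argument is correct and is precisely the ``direct proof from the definition \eqref{def-alpha}'' that the paper alludes to but does not spell out: differentiate $\alpha=\exp_\ast(sA)\ast\exp_\ast(sB)$ via the Leibniz rule to obtain $A\ast\alpha+\alpha\ast B$, then regroup into the commutator form. The care you take with the commutation $B\ast\exp_\ast(sB)=\exp_\ast(sB)\ast B$ and with the finiteness/polynomiality in $s$ is appropriate, and your closing remark on uniqueness (initial condition $\alpha|_{s=0}=\epsilon$) neatly justifies the article ``the'' in the statement.
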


It is the fact that the matroid Tutte polynomial is a solution of the differential equation 
\eqref{diffeqalpha} that will be used now to prove the 
universality of the matroid Tutte polynomial. In order to do that, we take a
four-variable matroid polynomial $Q_M(x,y,a,b)$ satisfying 
a multiplicative law and which has the following properties: 
\begin{itemize}
 \item if $e$ is a coloop, then
\begin{equation}
Q_M(x,y,a,b) = xQ_{M\backslash e}(x,y,a,b)\ ,
\end{equation}
\item  if $e$ is a loop, then
\begin{equation}
 Q_M(x,y,a,b) = yQ_{M/e}(x,y,a,b)
\end{equation}
\item if $e$ is a nonseparating point, then
\beqa
Q_M(x,y,a,b) = aQ_{M\backslash e}(x,y,a,b)+bQ_{M/e}(x,y,a,b).
\eeqa
\end{itemize}

\begin{remark}
 Note that, when one deals with the same problem in the case of graphs, 
a supplementary multiplicative condition for the case of one-point joint of two graphs 
({\it i. e.} identifying a vertex of the first graph and a vertex of the second graph 
  into a single vertex of the resulting graph) 
is required (see, for example, \cite{em} or \cite{sokal}).
\end{remark}

We now 
define the application:
\beqa
\label{defbeta}
\beta(x,y,a,b,s,M):=s^{|E|}Q_M(x,y,a,b).
\eeqa
One then directly checks (using the definition \eqref{defbeta} above and 
the multiplicative property of the polynomial $Q$)  
that 
this application is again a matroid Hopf algebra character.

\begin{proposition}
\label{propbeta}
The character \eqref{defbeta} satisfies the following differential equation: 
\begin{equation}
\label{diffeqbeta}
\frac{d\beta}{ds} (M) = \left( x\beta \ast \delta_{\mathrm{coloop}} + y\delta_{\mathrm{loop}}\ast \beta + 
b [\delta_{\mathrm{coloop}},\beta]_\ast - a [\delta_{\mathrm{loop}},\beta]_\ast\right) (M).
\end{equation}
\end{proposition}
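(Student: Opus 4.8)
The plan is to mimic exactly the derivation of equation \eqref{diffeqalpha} for $\alpha$ in Proposition \ref{propalpha}, now for $\beta$. First I would show that $\beta$ satisfies a recursion governed by the three reduction rules for $Q$ together with the multiplicative law. The key observation is that the coproduct on a matroid $M=(E,\I)$, restricted to its ``length-one'' pieces, picks out exactly the single-element minors: for each $e\in E$ one gets a term $M|_{\{e\}}\otimes M/\{e\}$ (and symmetrically $M|_{E\smallsetminus\{e\}}\otimes M/(E\smallsetminus\{e\})$), and $M|_{\{e\}}$ is either $U_{0,1}$ (if $e$ is a loop) or $U_{1,1}$ (if $e$ is a coloop or a nonseparating point). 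Thus convolving $\beta$ on one side with $\delta_{\mathrm{loop}}$ or $\delta_{\mathrm{coloop}}$ extracts precisely the contributions of single loops, coloops, and ordinary points of $M$ — which is exactly the data the three defining rules for $Q$ control.

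Concretely, I would proceed as follows. Write $\beta=\beta(x,y,a,b,s,\cdot)$ and differentiate $\beta(M)=s^{|E|}Q_M(x,y,a,b)$ in $s$, getting $\tfrac{d\beta}{ds}(M)=|E|\,s^{|E|-1}Q_M$. The right-hand side of \eqref{diffeqbeta}, evaluated on $M$ via the definition \eqref{def-conv} of the convolution product, becomes a sum over $e\in E$ of terms $\beta(M|_{E\smallsetminus\{e\}})\,\delta_{\bullet}(M/(E\smallsetminus\{e\}))$ and $\delta_{\bullet}(M|_{\{e\}})\,\beta(M/\{e\})$, where only the single-element minor factors survive because $\delta_{\mathrm{loop}}$ and $\delta_{\mathrm{coloop}}$ vanish off $U_{0,1}$, $U_{1,1}$. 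Partitioning $E$ into loops, coloops, and nonseparating points, and noting that for a loop or coloop $e$ one has $M|_{E\smallsetminus\{e\}}=M\smallsetminus e$ and $M/(E\smallsetminus\{e\})=M/e$ (Lemma \ref{lm:res-del}), while for such $e$ also $M/e=M\smallsetminus e$ (Lemma \ref{lm:coloop}), each class of elements contributes $s^{|E|-1}Q_{M\smallsetminus e}$ or $s^{|E|-1}Q_{M/e}$ with the appropriate coefficient. Using the three reduction rules for $Q$ — $Q_M=xQ_{M\smallsetminus e}$ for a coloop, $Q_M=yQ_{M/e}$ for a loop, $Q_M=aQ_{M\smallsetminus e}+bQ_{M/e}$ for a nonseparating point — one checks that the coefficients assemble so that the whole right-hand side equals $\bigl(\#\text{loops}+\#\text{coloops}+\#\text{nonseparating points}\bigr)s^{|E|-1}Q_M=|E|\,s^{|E|-1}Q_M$, which matches $\tfrac{d\beta}{ds}(M)$. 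The particular coefficients $x,y$ on the ``pure'' $\delta\ast\beta$ / $\beta\ast\delta$ terms and $b,-a$ on the commutators are precisely what is needed for the bookkeeping to close: the $-a[\delta_{\mathrm{loop}},\beta]$ term supplies the $a$ coefficient on $M\smallsetminus e$ for nonseparating $e$ while its ``wrong-side'' piece cancels part of the $y\,\delta_{\mathrm{loop}}\ast\beta$ over-count on loops, and symmetrically for $b[\delta_{\mathrm{coloop}},\beta]$ against $x\,\beta\ast\delta_{\mathrm{coloop}}$ on coloops.

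The main obstacle, and the step deserving the most care, is this last cancellation/counting argument: one must verify that summing the four operator terms on the right of \eqref{diffeqbeta} over the partition of $E$ into the three element-types reproduces $|E|\,Q_M$ with no residual terms, using each of the three rules for $Q$ exactly on its own element-type and checking that the commutator structure corrects the double-counting coming from the two-sided nature of the coproduct's length-one part. This is essentially the same computation that underlies Proposition \ref{propalpha} (the case $a=1$, $b=1$, where \eqref{diffeqalpha} is obtained), so I would either carry it out in parallel with that proof or, more economically, invoke the uniqueness of the solution of the linear ODE \eqref{diffeqbeta} with initial condition $\beta(\cdot)|_{s=0}=\epsilon$: since $\beta$ is a character (already checked) and the exponential-of-infinitesimal-character structure guarantees the non-trivial part of the coproduct is nilpotent, the ODE \eqref{diffeqbeta} has a unique character solution, and one only needs to exhibit that $\beta=s^{|E|}Q_M$ solves it, which reduces to the single-element check above.
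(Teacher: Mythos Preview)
Your approach is essentially the paper's: expand the right-hand side via the coproduct, use the support of $\delta_{\mathrm{loop}}$ and $\delta_{\mathrm{coloop}}$ to reduce the sums over $A\subseteq E$ to the single-element pieces $A=\{e\}$ and $A=E\setminus\{e\}$, partition $E$ into loops, coloops, and nonseparating points, and apply the three recursion rules for $Q$ so that every $e\in E$ contributes $s^{|E|-1}Q_M$, yielding $|E|\,s^{|E|-1}Q_M=\tfrac{d\beta}{ds}(M)$. One slip to fix: you write ``$M/(E\smallsetminus\{e\})=M/e$'', but $M/(E\setminus\{e\})$ is the one-element matroid on $\{e\}$ (namely $U_{1,1}$ if $e$ is a coloop of $M$ and $U_{0,1}$ otherwise), not $M/e$; this identification of $M/(E\setminus\{e\})$ and $M|_{\{e\}}$ with $U_{0,1}$ or $U_{1,1}$ according to the type of $e$ is precisely what the paper's four-case analysis makes explicit, and it is what drives the cancellation you describe in your last paragraph.
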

\begin{proof}
Applying the definition \eqref{def-conv} of the convolution product,
the rhs of equation \eqref{diffeqbeta} above writes
 \begin{align}
\label{eq:betamat}
& = (x-b)\sum_{A\subseteq E} \beta(M|_A) \delta_{\mathrm{coloop}}(M/_A) + (y-a)\sum_{A\subseteq E} \delta_{\mathrm{loop}}(M|_A)\beta(M/_A)\notag\\
& + b \sum_{A\subseteq E} \delta_{\mathrm{coloop}}(M|_A)\beta(M/_A) + a \sum_{A\subseteq E} \beta(M|_A) \delta_{\mathrm{loop}}(M/_A).
\end{align}
Using the definitions \eqref{def-dloop} and respectively 
\eqref{def-dtree} of the infinitesimal characters
$\delta_{\mathrm{loop}}$ and respectively $\delta_{\mathrm{coloop}}$, constraints the sums on the subsets $A$ above. 
The rhs of \eqref{diffeqbeta}
becomes:
\beqa
& (x-b)\sum_{A, M/_A = U_{1,1}} \beta(M|_A) + (y-a)\sum_{A, M|_A= U_{0,1}} \beta(M/_A)\notag\\
& + b \sum_{A, M|_A = U_{1,1}} \beta(M/_A) + a \sum_{A, M/_A = U_{0,1}}\beta(M|_A)
\eeqa
We now apply the definition of the Hopf algebra character $\beta$; one obtains:
\beqa
\label{int1}
&s^{|E|-1}[(x-b)\sum_{A, M/_A = U_{1,1}} Q(x,y,a,b,M|_A) + 
(y-a)\sum_{A, M|_A= U_{0,1}} Q(x,y,a,b,M/_A)\notag\\
& + b \sum_{A, M|_A = U_{1,1}} Q(x,y,a,b,M/_A) + a \sum_{A, M/_A = U_{0,1}}Q(x,y,a,b,M|_A)].
\eeqa
We can now directly analyze the four particular cases $M/_A=U_{1,1}$, $M/_A=U_{0,1}$, 
$M|_A=U_{1,1}$ and $M|_A=U_{0,1}$:
\begin{itemize}
 \item 
If 
 $M/_A=U_{1,1}$, we can denote the ground set of $M/_A$ by $\{ e \}$. Note that $e$ is a coloop.
From the Lemma \ref{lm:res-del}, one has $M|_A = M\backslash_{E-A}=M\backslash e$.
One then has 
$Q(x,y,a,b,M) = xQ(x,y,a,b,M|_A)$.

\item
If $M|_A= U_{0,1}$, then $A=\{e\}$ and $e$ is a loop of $M$. Thus, one has 
$Q(x,y,a,b,M) = yQ(x,y,a,b,M/_A)$

\item If $M|_A = U_{1,1}$, then $A=\{e\}$. 
One has to distinguish between two subcases:
\begin{itemize}
\item $e$ is a coloop of $M$. Then, by Lemma \ref{lm:coloop}, 
$M/e = M\backslash e$. Thus, one has $Q(x,y,a,b,M) = xQ(x,y,a,b,M|_A)$.
\item $e$ is a nonseparating point of $M$. 
\end{itemize}

\item If $M/_A = U_{0,1}$, one can denote the ground set of $M/_A$ by 
$ \{e\}$. 
There are again two subcases to be considered:
\begin{itemize}
\item $e$ is a loop of $M$, one has that $M|_A = M\backslash_{(E-A)} = M\backslash_{\{e\}} = M/e$. 
Then one has $Q(x,y,a,b,M) = yQ(x,y,a,b,M|_A)$.
\item $e$ is a nonseparating point of $M$, then one has $M|_A = M\backslash_{(E-A)} = M\backslash_{\{e\}}$
\end{itemize}
\end{itemize}
We now insert all of this in equation \eqref{int1}; this leads to three types of sums over 
some element $e$ of the ground set $E$, $e$ being a loop, a coloop or a 
nonseparating point:
\begin{align}
s^{|E|-1}[\sum_{e \in E: e \mbox{\tiny is a coloop}} Q(x,y,a,b,M) + \sum_{e\in E: e \mbox{\tiny is a loop}} Q(x,y,a,b,M) + \sum_{e \in E: e \mbox{\tiny is a regular element}}Q(x,y,a,b,M)] \notag\\
\end{align}
This rewrites as
\begin{align}
|E|s^{|E|-1}Q(x,y,a,b,M) = \frac{d\beta}{ds}(M),
\end{align}
which completes the proof.
\end{proof}

\medskip

We can now state the {\it main result} of this paper, the recipe theorem
specifying how to recover the matroid polynomial $Q$ as 
an evaluation of the Tutte polynomial $T_M$:

\begin{theorem}
One has:
\begin{equation}
 \label{eqrecipe}
Q(x,y,a,b,M) = a^{n(M)}b^{r(M)}T_M(\frac{x}{b},\frac{y}{a}).
\end{equation}
\end{theorem}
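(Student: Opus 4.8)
The plan is to identify the two characters $\alpha$ and $\beta$ by showing they solve the same initial value problem, after a suitable change of variables, and then read off the recipe formula using Proposition \ref{alpha-tutte}. Concretely, I would first apply the bialgebra automorphism $\varphi_{a,b}$ of Lemma \ref{lemamor} — or rather compose $\beta$ with an appropriate rescaling — so that the differential equation \eqref{diffeqbeta} for $\beta$ is brought into the shape of the differential equation \eqref{diffeqalpha} for $\alpha$. Comparing \eqref{diffeqalpha} and \eqref{diffeqbeta}, the coefficients differ: $\alpha$ has $[\delta_{\mathrm{coloop}},\alpha]_\ast - [\delta_{\mathrm{loop}},\alpha]_\ast$ while $\beta$ has $b\,[\delta_{\mathrm{coloop}},\beta]_\ast - a\,[\delta_{\mathrm{loop}},\beta]_\ast$, and both share the $x\,(\cdot)\ast\delta_{\mathrm{coloop}}$ and $y\,\delta_{\mathrm{loop}}\ast(\cdot)$ terms. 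So the natural move is to rescale the arguments $x\mapsto x/b$, $y\mapsto y/a$ and to twist by $\varphi_{a,b}$ (equivalently, by the infinitesimal characters $a\delta_{\mathrm{loop}}$, $b\delta_{\mathrm{coloop}}$), which on matroids of ground-set size one multiplies $\delta_{\mathrm{coloop}}$-type contributions by $b$ and $\delta_{\mathrm{loop}}$-type contributions by $a$.

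The key steps, in order, are: (1) observe that $\beta$ is the unique character-valued solution of \eqref{diffeqbeta} with $\beta|_{s=0}=\epsilon$, since the non-trivial part of the coproduct is nilpotent so the ODE can be solved order-by-order in $s$ by the grading by $|E|$; the same uniqueness holds for $\alpha$ via Proposition \ref{propalpha}. (2) Define $\tilde\alpha(M) := a^{n(M)}b^{r(M)}\,\alpha(x/b,y/a,s,M)$, i.e. apply the rescaling of variables and multiply by the eigenvalue of $\varphi_{a,b}$; since $\varphi_{a,b}$ is a bialgebra automorphism, $\tilde\alpha$ is again a character. (3) Differentiate $\tilde\alpha$ in $s$: using $\frac{d\alpha}{ds}$ from \eqref{diffeqalpha} with $x\to x/b$, $y\to y/a$, and the fact that $\varphi_{a,b}$ intertwines the convolution (because $\Delta\circ\varphi_{a,b}=(\varphi_{a,b}\otimes\varphi_{a,b})\circ\Delta$), check that $\tilde\alpha$ satisfies exactly \eqref{diffeqbeta}. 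The bookkeeping here is that $\varphi_{a,b}\ast\delta_{\mathrm{coloop}}$-type terms pick up a factor $b$ and $\delta_{\mathrm{loop}}\ast\varphi_{a,b}$-type terms pick up a factor $a$, converting the "$1,-1$" coefficients in front of the commutators into "$b,-a$", while the surviving $x\alpha\ast\delta_{\mathrm{coloop}}$ term, with $x$ replaced by $x/b$ and then an extra $b$ from the twist, becomes $x\beta\ast\delta_{\mathrm{coloop}}$ (similarly for the $y$ term with $a$). (4) Since $\tilde\alpha$ and $\beta$ are both characters solving \eqref{diffeqbeta} with the same value $\epsilon$ at $s=0$, uniqueness gives $\beta=\tilde\alpha$. (5) Evaluate at $s=1$ and invoke Proposition \ref{alpha-tutte}: $\beta(M)=Q_M(x,y,a,b)$ on the one hand (from \eqref{defbeta} with $s=1$), and $\tilde\alpha(M)=a^{n(M)}b^{r(M)}T_M(x/b,y/a)$ on the other, which is precisely \eqref{eqrecipe}.

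The main obstacle I anticipate is step (3): getting the twisting by $\varphi_{a,b}$ to interact correctly with the convolution products and the commutators in \eqref{diffeqalpha}, and in particular making sure the factors of $a$ and $b$ land on the right terms. The subtlety is that $\varphi_{a,b}$ acts by $a^{r}b^{n}$, so when it is inserted next to $\delta_{\mathrm{coloop}}$ (which is supported on $U_{1,1}$, of rank $1$) versus next to $\delta_{\mathrm{loop}}$ (supported on $U_{0,1}$, of nullity $1$), one must track that $r(M|_A)+r(M/A)=r(M)$ and $n(M|_A)+n(M/A)=n(M)$ (the rank identity is \eqref{ranks}, established in the proof of Lemma \ref{lemamor}), so that the scalar $a^{n(M)}b^{r(M)}$ factors cleanly through each term of the coproduct and leaves behind exactly one extra $a$ or one extra $b$ per $\delta$-insertion. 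An alternative, slightly more computational route that avoids the abstract twisting is to run the argument used in Proposition \ref{propbeta} in reverse: expand $\beta$ via $\beta(M)=s^{|E|}Q_M(x,y,a,b)$, use the deletion--contraction relations for $Q$ directly together with Lemma \ref{lema-help}-type expansions of $\mathrm{exp}_\ast$, and match term by term against $a^{n(M)}b^{r(M)}T_M(x/b,y/a)=a^{n(M)}b^{r(M)}\sum_{A\subseteq E}(x/b-1)^{r(E)-r(A)}(y/a-1)^{n(A)}$; but the ODE/uniqueness argument is cleaner and I would present that as the main line.
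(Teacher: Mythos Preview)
Your proposal is correct and follows essentially the same strategy as the paper's own proof: both use the bialgebra automorphism $\varphi_{a,b}$ of Lemma \ref{lemamor} to convert the differential equation \eqref{diffeqbeta} for $\beta$ into the differential equation \eqref{diffeqalpha} for $\alpha$ with parameters $x/b$ and $y/a$, and then invoke Proposition \ref{alpha-tutte} to read off the recipe formula. Your write-up is in fact more detailed than the paper's, which leaves the uniqueness argument and the bookkeeping of the $a,b$ factors implicit; your explicit identification of $\tilde\alpha(M)=a^{n(M)}b^{r(M)}\alpha(x/b,y/a,s,M)$ and the tracking of rank/nullity additivity across the coproduct are exactly the details that make the paper's one-paragraph proof work (note only that the automorphism you actually need is $\varphi_{b,a}$ rather than $\varphi_{a,b}$, as you implicitly recognize when you write the prefactor as $a^{n(M)}b^{r(M)}$).
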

\begin{proof}
 The proof is a direct consequence of Propositions \ref{alpha-tutte}, \ref{propalpha} and 
\ref{propbeta} and of Lemma \ref{lemamor}. This comes from the fact that 
one can apply the automorphism $\phi$ defined in \eqref{defphi}
to the differential equation 
\eqref{diffeqbeta}. One then obtains the differential equation 
\eqref{diffeqalpha} with modified parameters $x/b$ and $y/a$. 
Finally, the solution of this differential equation is (trivially) related to 
the matroid Tutte polynomial $T_M$ (see Proposition \ref{alpha-tutte})
and this concludes the proof. 
\end{proof}

\bigskip

Let us end this section by stating that all the results obtained in this paper 
naturally hold for graphs (instead of matroids), since graphs are a particular class of 
matroids (the graphic matroids, see subsection $2.2$).
We have thus given here the proofs of the graph results conjectured in \cite{km}. 

\section{Concluding remarks and perspectives}
\renewcommand{\theequation}{\thesection.\arabic{equation}}
\setcounter{equation}{0}

We have  used in this paper a quantum field theory renormalization group-like equation 
to prove the universality of the Tutte polynomial for matroids.
Moreover, we gave a new proof of the convolution identity established in 
\cite{reiner} by W. Kook {\it et. al}.

Let us emphasize that the Hopf algebra coproduct \eqref{defc} 
used in this paper is a so-called type $I$ coproduct, namely 
a coproduct using a {\it selection-quotient} rule.
Examples of such coproducts are the 
Connes-Kreimer coproduct for commutative quantum field theory  Feynman 
graphs \cite{ck}, its generalization to 
non-commutative quantum field theory \cite{fab, io-dirk} and so on.
As it was already noticed in \cite{io-hdr} or \cite{nguyen}, 
this type of rule is fundamentally different of 
the one used to define a so-called type $II$ coproduct, namely a
{\it selection-deletion} rule. This latter rule has been 
extensively used in algebraic combinatorics (see, for example, 
\cite{thi1} and references within). Let us also notice that 
for these type $II$ coproducts, explicit polynomial realizations 
have been recently obtained (see \cite{pol-real} and references within).
These polynomial realizations are particularly interesting 
in order to give new, straightforward, proofs of the coassociativity of the respective coproducts
(see, for example, J.-Y. Thibon's talk \cite{talk-jyt}).

It thus appears to us as a particularly interesting perspective for future work 
the investigation of the existence of connections
between these type $I$ and $II$ combinatorial Hopf algebra coproducts.
Such connections could eventually be obtained
by exhibiting explicit polynomial realizations for the 
type $I$ coproduct combinatorial Hopf algebras, thus completing the 
picture of combinatorial Hopf algebra polynomial realizations.



\noindent
{\small ${}^{a}${\it Centre de Physique Th\'eorique, Campus de Luminy,}} \\
{\small {\it Case 907 13288 Marseille cedex 9, France}} \\
{\small ${}^{b}${\it Laboratoire de Physique Th\'eorique, B\^atiment 210, Universit\'e Paris 11,}} \\
{\small {\it 91405 Orsay Cedex, France, EU}}\\
{\small ${}^{c}${\it 
Horia Hulubei National Institute for Physics and Nuclear Engineering,\\
P.O.B. MG-6, 077125 Magurele, Romania, EU}}\\
{\small ${}^{d}${\it Universit\'e Paris 13, Sorbonne Paris Cit\'e, 99, avenue Jean-Baptiste Cl\'ement \\
LIPN, Institut Galil\'ee, 
CNRS UMR 7030, F-93430, Villetaneuse, France, EU}}\\
\end{document}